\newtheorem{example}{Example}
\def\norm#1{\|#1\|}
\def\hess#1{\mbox{\sl Hess}_{\! #1}\, }
\def\cN{{\cal N}}
\def\cP{{\cal P}}
\def\exp{\mathrm{exp}}
\def\End{\mathrm{End}}
\newcommand{\N}{{\mathbbm{N}}} 
\newcommand{\R}{{\mathbbm{R}}} 
\def\scp#1{\left\langle #1\right\rangle}
\def\norm#1{\left\| #1\right\|}
\newenvironment{revision} {\color{black}} {\color{black}}
\newenvironment{rrev} {\color{black}} {\color{black}}
\title{A Riemannian View on Shape Optimization
}
\author{Volker H.~Schulz\thanks{University of Trier, Department of Mathematics, 54296 Trier, Germany ({\tt volker.schulz@uni-trier.de}).}
        }
\begin{document}

\maketitle

\begin{abstract}
Shape optimization based on the shape calculus is numerically mostly performed by means of steepest descent methods. This paper provides a novel framework to analyze shape-Newton optimization methods by exploiting a Riemannian perspective. A Riemannian shape Hessian is defined \begin{revision} possessing\end{revision}  often sought properties like symmetry and quadratic convergence for Newton optimization methods.
\end{abstract}

\begin{AMS}
49Q10, 49M15, 53B20
\end{AMS}

\begin{keywords} 
Shape optimization, Riemannian manifold, Newton method 
\end{keywords}

\pagestyle{myheadings}
\thispagestyle{plain}

\section{Introduction}
Shape optimization is a vivid field of research. In particular, the usage of shape calculus for practical applications has increased steadily \cite{Arian-1995,ARTASAN-1996,Arian-Vatsa-1998,ESSI-2009,SS-2009,CoCy2010,ComFluid2011,GISS2012}. Standard references for shape optimization based on the shape calculus are \cite{Delfour-Zolesio-2001,Mohammadi-2001,Sokolowski-1992}. The shape Hessian is already used as a means to accelerate gradient based shape optimization methods \cite{Epp-Har-2005,ESSI-2009,GISS2012,Schmidt-thesis}. It is also used for characterizing the well-posedness and sufficient optimality conditions \cite{Eppler-2000,EH-2012} in particular applications and is reformulated in the framework of differential forms in \cite{Hipt-2011}. A fairly general framework for descent methods for shape optimization is presented in \cite{Hintermueller-2005}. \begin{revision} Furthermore, general optimization strategies in shape space are discussed in \cite{Ring-Wirth-2012}.\end{revision}

However, a general framework for the analysis of Newton-type shape optimization algorithms is still missing. A major reason is the lack of symmetry \cite{Delfour-Zolesio-2001,Hipt-2011} of the shape Hessian, as it is commonly defined. \begin{revision}In \cite{HR-2003, HR-2004}, the lack of symmetry is circumvented by the choice of certain perturbation fields\end{revision}. In this paper, an attempt is made to cast shape optimization problems in the framework of optimization on Riemannian manifolds. There is a fairly large amount of publications available on the issue of optimization on Riemannian manifolds---mainly for matrix manifolds as in \cite{Absil-book-2008,Qi-2011}. 

It is proposed in this paper, to view the set of all shapes as a Riemannian manifold and follow there the ground breaking work in \cite{MM-2006,Michor06anoverview,MM-2005,BHM-2011,BHM-2011-unpublished}. The resulting manifold is an abstract infinite dimensional manifold, in contrast to the finite dimensional submanifolds of $\R^n$ that arise in optimization on matrix manifolds. Therefore, distance concepts have to be reviewed and used with somewhat more care. The key observation of this paper is that the action of an element of the tangent space of the manifolds of shapes can be interpreted as the shape derivative of classical shape calculus. Once this link is established, the concept of Riemannian shape Hessian, shape Taylor series, shape Newton convergence and sufficient shape optimality conditions follow quite naturally. 

In section \ref{sec2} the notation for the manifold of shapes is introduced and for a particular example of a Riemannian geometry, the correspondence between Riemannian geometry and shape calculus is established. The key element of the covariant derivative is rephrased in terms  of the shape calculus. The Riemannian shape Hessian is defined and the Riemannian shape Taylor series formulated. Section \ref{sec:Newton} presents a generalization of the Newton convergence theory, established in \cite{BockHxG1987a} for linear spaces, on Riemannian manifolds. From that, convergence properties of variants of Newton's method on Riemanian manifolds follow immediately. Finally, section \ref{numex} discusses numerical experiments for shape optimization algorithms with linear and quadratic convergence properties.

\section{Riemannian Shape Geometry and the Shape Calculus}\label{sec2}
The purpose of this section is to demonstrate the possibility to define a Riemannian metric on the manifold of all possible shapes with a relation to shape calculus. Since this point of view is new, we use the established framework of differential geometry for shapes which are $C^\infty$ embeddings of the unit sphere. Of course, this framework has to be generalized for specific applications. However, the purpose of this paper is to convey a new point of view rather than the impact on applications. Therefore, we assume in the interest of simplicity of the presentation maximum smoothness and restrict ourself to only 2D problems.

It is assumed that the reader is familiar with the basic concepts of Riemannian geometry as they are, e.g., geodesics, exponential mapping, parallel transport. In \cite{MM-2006,Michor06anoverview}, a geometric structure of two-dimensional $C^\infty$-shapes has been introduced and consequently generalized to shapes in higher dimension in \cite{MM-2005,BHM-2011,BHM-2011-unpublished}. Essentially, closed curves (and closed higher dimensional surfaces) are identified with mappings of the unit sphere to any shape under consideration. In two dimensions, this can be naturally motivated by the Riemannian mapping theorem. In this paper, we focus on two-dimensional shapes as subsets of $\R^2$ for ease of discussion, but mention the other publications above, in order to indicate that natural extensions of this paper to higher dimensional surfaces are conceivable. 

Here, we mean with "shape" a \begin{revision}simply\end{revision} connected and compact subset $\Omega$ of $\R^2$ with $\Omega\neq\emptyset$ and $C^\infty$ boundary $\partial\Omega$. As always in shape optimization, the boundary of the shape is all that matters. Thus, we can identify the set of all shapes with the set of all those boundaries. In \cite{MM-2006}, this set is characterized by  
\[
B_e(S^1,\R^2):=\mbox{Emb}(S^1,\R^2)/\mbox{Diff}(S^1)
\] 
i.e., as the set of all equivalence classes of $C^\infty$ embeddings of $S^1$ \begin{revision}into\end{revision} the plane ($\mbox{Emb}(S^1,\R^2)$), where the equivalence relation is defined by the set of all $C^\infty$ re-parameterizations, i.e., diffeomorphisms of $S^1$ into itself ($\mbox{Diff}(S^1)$). The set $B_e$ is considered as a manifold in \cite{MM-2006} and various Riemannian metrics are investigated. A particular point on the manifold $B_e(S^1,\R^2)$ is represented by a curve $c:S^1\ni\theta\mapsto c(\theta)\in\R^2$. Because of the equivalence relation ($\mbox{Diff}(S^1)$), the tangent space is isomorphic to the set of all normal $C^\infty$ vector fields along $c$, i.e.
\[
T_cB_e\cong\{h\ |\ h=\alpha\vec{n},\, \alpha\in C^\infty(S^1,\R)\}
\]
where $\vec{n}$ is the unit exterior normal field of the shape $\Omega$ defined by the boundary $\partial\Omega=c$ such that $\vec{n}(\theta)\perp \begin{revision}c^\prime\end{revision}$ for all $\theta\in S^1$ \begin{revision}and $c^\prime$ denotes the circumferential derivative as in \cite{MM-2006}\end{revision}. For our discussion, we pick among the other metrics discussed in \cite{MM-2006} the metric family for
$A\ge 0$
\begin{align*}
G^A:\ &T_cB_e\times T_cB_e\to \R\\
    & (h,k)\mapsto \int_{S^1}(1+A\kappa_c(\theta)^2)\scp{h(\theta),k(\theta)}\norm{c_{\theta}(\theta)}d\theta
\end{align*}
where $\kappa_c$ denotes the curvature of the curve $c$ and \begin{revision}$\scp{x,y}:=x_1y_1+x_2y_2$\end{revision} and \begin{revision}$\norm{x}:=\sqrt{\scp{x,y}}$\end{revision} mean the standard Euclidian scalar product and norm in $\R^2$. \begin{revision} Besides this metric,\end{revision}
there are many more Riemannian metrics $G$ available which can be used in a similar way. \begin{revision}For instance,
in \cite{SMSY11} a Sobolev-type metric is adapted to the particular needs of image tracking. \end{revision} If $h=\alpha\vec{n}$ and $k=\beta\vec{n}$, then this scalar product on $T_cB_e$ can be expressed more \begin{revision}simply\end{revision} as
\[
G^A(h,k)=\int_{\partial\Omega}(1+A\kappa_c^2)\alpha\beta ds
\]
where $ds$ is the length measure on $\partial\Omega=c$. In \cite{MM-2006} it is shown that for $A>0$ the scalar product $G^A$ defines a Riemannian metric on $B_e$ and 
thus, geodesics can be used to measure distances, where $d_{G^A}(.,.)$ denotes the corresponding geodesic distance. 
Unfortunately, this is not the case for the most simple member $G^0$ of the metric family $G^A$, where $A=0$. An illustrative counter-example is given in \cite{MM-2006}. In section \ref{sec:Newton}, we use the fact that $(B_e, G^A)$ is a Riemannian manifold extensively and exploit the existence of the exponential map ($\exp$) according to the usual definition in Riemannian geometry.

Now, we want to analyze the correlation of the Riemannian geometry on $B_e$ with shape optimization. In 2D shape optimization, one searches for the solution of optimization problems of the form
\[
\min\limits_{\Omega}f(\Omega)
\]
where $f$ is a real valued shape differentiable objective function. Often, the problem formulation involves explicit constraints in the form of differential equations and additional state variables as in like in the early publications \cite{Arian-1995,Delfour-Zolesio-2001,Mohammadi-2001,Sokolowski-1992}. Also shape Hessian preconditioning studies have been performed in \cite{ESSI-2009,SS-2009,Epp-Har-2005,EH-2012}. For the sake of ease of presentation, we can assume all those possible additional structures are contained implicitly within the mapping $f$. The shape derivative of $f$ is a directional derivative in the direction of a $C^\infty$ vector field $V:\R^2\to\R^2$ which can be represented on the boundary according to the Hadamard structure theorem \cite{Sokolowski-1992} as \begin{revision}a scalar distribution $g$ on the boundary. If $g\in L^1(\partial\Omega)$, the shape derivative can be expressed as 
\[
df(\Omega)[V]=\int_{\partial\Omega}g\, \scp{V,\vec{n}}\, ds\, .
\]
\end{revision}
If $V|_{\partial\Omega}=\alpha\vec{n}$, this can be written more concisely as
\[
df(\Omega)[V]=\int_{\partial\Omega}g\, \alpha\, ds
\]
In Riemannian geometry, tangential vectors are considered as directional derivatives of scalar valued functions. Since curves $c\in B_e$ can be interpreted as boundaries of domains $\Omega$ with boundary $c=\partial\Omega$, we can consider every scalar valued function $f:c=\partial\Omega\mapsto \R$ also as mapping $f:\Omega\mapsto\R$. 
Thus, we see that the action of a tangent vector $h\in T_cB_e$ on a scalar valued function $f:B_e\to\R$ can be interpreted in the shape calculus, via the unique identification of the boundary $c=\partial\Omega$  with its shape $\Omega$, as the shape derivative of $f$ with respect to an arbitrary $C^\infty$ extension $V$ of $h$ in the whole domain $\Omega$ with $V|_{\partial\Omega}=h$. Thus, we can write
\[
h(f)(c)=df(\Omega)[V]=\int_{\partial\Omega}g\, \alpha\, ds
\]
if $h=\alpha\vec{n}$. Also, the gradient in terms of a Riemannian representation of the shape derivative in terms of the metric $G^A$ can be written as
\[
\mbox{grad}f=\frac{1}{1+A\kappa_c^2}g
\]
The essential operation in Riemannian geometry is the covariant derivative $\nabla_hk$ which is a directional derivative of vector fields in terms of tangential vectors such that  $\nabla_hk\in T_cB_e$, if $h,k \in T_cB_e$. \begin{revision}Often in differential geometry\end{revision}, the covariant derivative is written in terms of the Christoffel symbols. In \cite{MM-2006} explicit expressions for the Christoffel symbols are derived in terms of the Riemannian metric $G^A$. However, in order to reveal the relation with the shape calculus, we show another representation of the covariant derivative in theorem \ref{covariant}.
\begin{theorem}\label{covariant}
Let $\Omega\in\R^2$ be a shape and $V,W\in C^\infty(\R^2,\R^2)$ vector fields which are orthogonal at the boundary $\partial\Omega$, i.e.,
$V|_{\partial\Omega}=\alpha\vec{n}$ with $\alpha:=\scp{V|_{\partial\Omega},\vec{n}}$, and $W|_{\partial\Omega}=\beta\vec{n}$ with $\beta:=\scp{ W|_{\partial\Omega},\vec{n}}$, such that
$h:=\alpha\vec{n}$, $k:=\beta\vec{n}$ belong to the tangent space of $B_e$.  Then, the covariant derivative \begin{revision}associated with the Riemmanian metric $G^A$ \end{revision}can be expressed as
\begin{align*}
\nabla_{V}W&:=\nabla_{h}k=\frac{\partial\beta}{\partial\vec{n}}\alpha+
\frac12(\kappa_c+\frac{2A\kappa_c^3}{1+A\kappa_c^2})\alpha\beta+A\kappa_c(\alpha\beta)_{\tau\tau}\\
&=\scp{\begin{revision}\mathrm{D}\end{revision}W\,V,\vec{n}}+\frac12
(\kappa_c+\frac{2A\kappa_c^3}{1+A\kappa_c^2})\scp{V,\vec{n}}\scp{W,\vec{n}}
+A\kappa_c(\scp{V,\vec{n}}\scp{W,\vec{n}})_{\tau\tau}
\end{align*} 
where expressions ``$(.)_{\tau\tau}$'' mean second order derivative in unit tangential direction of the shape boundary \begin{revision}and the notation ``$\mathrm{D}W\,V$'' means the directional derivative of the vector field $W$ in the direction $V$\end{revision}.
\end{theorem}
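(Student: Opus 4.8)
The plan is to compute the Levi--Civita connection of $G^A$ from its defining Koszul formula and then translate the resulting expression into the language of the shape calculus. First I would realize the tangent vectors $h=\alpha\vec n$ and $k=\beta\vec n$ as restrictions of vector fields on the space of embeddings induced by the ambient fields, via $c\mapsto V\circ c$ and $c\mapsto W\circ c$. For such induced fields the Lie bracket on $\mbox{Emb}(S^1,\R^2)$ is the image of the ambient bracket $[V,W]=\mathrm DW\,V-\mathrm DV\,W$, which lets me evaluate every bracket term in the Koszul identity $2G^A(\nabla_hk,\cdot)=\dots$ explicitly. Collecting the three directional-derivative terms together with the bracket terms, all contributions containing a derivative of the ambient part of the test field cancel and the purely \emph{flat} part collapses to the single term $\scp{\mathrm DW\,V,\vec n}$.

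The remaining terms come from the dependence of the metric on the base curve. Here I would insert the standard first-variation formulas for a normal perturbation $\delta c=\alpha\vec n$: the variation of the length element $\partial_\alpha\,ds=\kappa_c\alpha\,ds$, of the unit normal $\partial_\alpha\vec n=-\alpha_\tau\vec t$, and of the curvature $\partial_\alpha\kappa_c=\alpha_{\tau\tau}+\kappa_c^2\alpha$. Substituting these into the derivatives of the weight $(1+A\kappa_c^2)$ and of $ds$ appearing in the three metric-derivative terms, integrating by parts in arc length to move all derivatives off the test field, and invoking the fundamental lemma of the calculus of variations, one reads off the integrand, which equals $(1+A\kappa_c^2)\,\nabla_hk$. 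Dividing by the weight $(1+A\kappa_c^2)$ is exactly what produces the factor $\tfrac12\kappa_c$ together with the rational term $\tfrac12\cdot\frac{2A\kappa_c^3}{1+A\kappa_c^2}$, while the curvature variation is responsible for the second tangential derivative $A\kappa_c(\alpha\beta)_{\tau\tau}$.

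The hard part will be the bookkeeping in this last step. Because $B_e$ is the quotient $\mbox{Emb}(S^1,\R^2)/\mbox{Diff}(S^1)$, the covariant derivative must be the horizontal (normal) component, so the induced fields have to be projected onto normal fields before differentiating; the tilt $\partial_\alpha\vec n=-\alpha_\tau\vec t$ then feeds the O'Neill-type correction of the associated Riemannian submersion. I expect substantial cancellation here---in particular the naive leading term $\kappa_c(\alpha\beta)_{\tau\tau}$ arising from the three weight-derivative terms drops out---so that the surviving second-order contribution appears only after the projection and the repeated integration by parts have been carried through consistently. The signs recorded above depend on the orientation convention for $\vec n$ and on the sign convention for $\kappa_c$, and must be fixed once and kept throughout. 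Alternatively, one may start directly from the Christoffel symbols of $G^A$ derived in \cite{MM-2006} and perform the same algebraic reduction, which avoids the submersion argument at the cost of a longer computation.

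Finally, to obtain the second displayed equality I would record the shape-calculus identity relating the two forms. Extending $\vec n$ off $\partial\Omega$ as the gradient of the signed distance function, so that $\mathrm D\vec n\,\vec n=0$, gives $\frac{\partial\beta}{\partial\vec n}=\scp{\mathrm DW\,\vec n,\vec n}$, and hence $\frac{\partial\beta}{\partial\vec n}\alpha=\scp{\mathrm DW\,(\alpha\vec n),\vec n}=\scp{\mathrm DW\,V,\vec n}$ on the boundary, where $V|_{\partial\Omega}=\alpha\vec n$. This identifies the flat term in both representations and completes the passage between the two expressions for $\nabla_{V}W$.
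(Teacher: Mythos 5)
Your proposal is correct in substance but takes a genuinely different route from the paper. The paper never invokes the Koszul formula or the submersion $\mbox{Emb}(S^1,\R^2)\to B_e$: it computes only the single term $h(G^A(k,\ell))$ as a shape derivative of the boundary functional $\int_{\partial\Omega}(1+A\kappa_c^2)\beta\gamma\,ds$ (using the same two variational identities you list, namely $\delta(ds)=\kappa_c\alpha\,ds$ and $\delta\kappa_c=\alpha_{\tau\tau}+\kappa_c^2\alpha$, plus periodicity of $\partial\Omega$ to move $(\cdot)_{\tau\tau}$ between factors), then splits the result by inspection into two mirror-symmetric halves $G^A(\nabla_hk,\ell)+G^A(k,\nabla_h\ell)$ --- this splitting is what produces the denominator $1+A\kappa_c^2$ --- and finally verifies (only in sketch) that the candidate so obtained is $C^\infty$-linear, satisfies the Leibniz rule and is torsion-free, so that uniqueness of the Levi-Civita connection pins it down; the Lie bracket enters only in that last symmetry check. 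Your Koszul-formula derivation instead determines $\nabla_hk$ directly and is self-certifying, since metric compatibility and torsion-freeness are built into the formula and no ansatz or separate axiom verification is needed; the price is the bracket computation for induced ambient fields and the horizontal-projection bookkeeping for the quotient by $\mbox{Diff}(S^1)$, all of which the paper avoids by working with normal fields on the boundary from the start. Two details of your plan need repair, though neither is fatal: the factor $\frac12\kappa_c$ does not come from ``dividing by the weight'' but from the variation of the arclength measure combined with the factor $2$ in the Koszul formula (it is present even for $A=0$), and no term $\kappa_c(\alpha\beta)_{\tau\tau}$ without a factor of $A$ can arise in the first place, since every second tangential derivative enters through the weight derivative $2A\kappa_c\,\delta\kappa_c$, so there is nothing of that form that needs to cancel. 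Finally, your observation that the identity $\frac{\partial\beta}{\partial\vec{n}}\alpha=\scp{\mathrm{D}W\,V,\vec{n}}$ requires fixing a specific extension of $\vec{n}$ (e.g.\ the gradient of the signed distance, so that $\mathrm{D}\vec{n}\,\vec{n}=0$) addresses a point the paper passes over silently, and is worth keeping.
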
 
\begin{proof}
\begin{revision}The strategy which leads to the expression for $\nabla_{V}W$ above is to exploit \end{revision}the product rule for Riemannian connections, $hG^A(k,\ell)=G^A(\nabla_{h}k,\ell)+G^A(k,\nabla_{h}\ell)$ for any $\ell\in T_cB_e$. \begin{revision}The left hand side of the product rule is expressed in more details which are then grouped in two terms for the right hand side.\end{revision}
We assume that $Z\in C^\infty(\R^2,\R^2)$ is a vector field with $\ell:=Z|_{\partial\Omega}=\gamma\vec{n}$. Then
\begin{revision}the application of shape calculus rules for volume and boundary functionals as in \cite{Delfour-Zolesio-2001} gives \end{revision}
\begin{align}
h(G^A(k,\ell))=&d\left(\int_{\partial\Omega}(1+A\kappa_c^2)\beta\gamma ds\right)[V]\\
=&
\int_{\partial\Omega}\frac{\partial[(1+A\kappa_c^2)\beta\gamma]}{\partial\vec{n}}\alpha+
\kappa_c(1+A\kappa_c^2)\alpha\beta\gamma ds\\
=&\label{eq-2.3}
\int_{\partial\Omega}2A\kappa_c(\kappa_c^2\alpha+\alpha_{\tau\tau})\beta\gamma+(1+A\kappa_c^2)\frac{\partial\beta}{\partial\vec{n}}\gamma\alpha\\
&\qquad +\nonumber
(1+A\kappa_c^2)\frac{\partial\gamma}{\partial\vec{n}}\beta\alpha+\kappa_c(1+A\kappa_c^2)\alpha\beta\gamma ds
\\
=&\label{michor-eq}
\int_{\partial\Omega}2A\kappa_c^3\alpha\begin{revision}\beta\gamma\end{revision}+2A\kappa_c\alpha(\beta\gamma)_{\tau\tau}+(1+A\kappa_c^2)\frac{\partial\beta}{\partial\vec{n}}\gamma\alpha\\
&\qquad +\nonumber
(1+A\kappa_c^2)\frac{\partial\gamma}{\partial\vec{n}}\beta\alpha+\kappa_c(1+A\kappa_c^2)\alpha\beta\gamma ds
\end{align}
\begin{revision} where we note in equation (\ref{eq-2.3}) that 
$(\partial \kappa_c/\partial\vec{n})\alpha=\kappa_c^2\alpha+\alpha_{\tau\tau}$,\end{revision} which is an immediate consequence of equation (7) in section 2.2 of \cite{MM-2006} \begin{revision} and in equation (\ref{michor-eq}) that the integrands $\alpha_{\tau\tau}\beta\gamma$ and $\alpha(\beta\gamma)_{\tau\tau}$ give the same integral because of the periodicity of $\partial\Omega$.
The expression in equation (\ref{michor-eq}) is now split in two mirror symmetric parts according to the product rule for Riemannian connections and written in the form of scalar products with $G^A$, thus causing the denominator $(1+A\kappa_c^2)$. It remains to show $C^\infty$ linearity, the Leibniz rule and symmetry (torsion free). We sketch symmetry here:
\begin{align*}
\nabla_{h}k-\nabla_{k}h=&\frac{\partial\beta}{\partial\vec{n}}\alpha+
\frac12(\kappa_c+\frac{2A\kappa_c^3}{1+A\kappa_c^2})\alpha\beta+A\kappa_c(\alpha\beta)_{\tau\tau}\\
-&
\left(\frac{\partial\alpha}{\partial\vec{n}}\beta+
\frac12(\kappa_c+\frac{2A\kappa_c^3}{1+A\kappa_c^2})\beta\alpha+A\kappa_c(\beta\alpha)_{\tau\tau}\right)\\
=&\frac{\partial\beta}{\partial\vec{n}}\alpha -\frac{\partial\alpha}{\partial\vec{n}}\beta =
{h}(k)-{k}(h)=[{h},{k}]
\end{align*}
where ``$[.,.]$'' denotes the Lie bracket. In the penultimate equation, it should be noted that most of the terms in ${h}(k)-{k}(h)$ cancel out, just leaving over only $\frac{\partial\beta}{\partial\vec{n}}\alpha -\frac{\partial\alpha}{\partial\vec{n}}\beta$.
\end{revision}
\end{proof}

The Riemannian connection now gives the means to investigate the Hessian of an objective defined on a shape, resp. its boundary. As in \cite{Absil-book-2008}, we define the Riemannian Hessian of a function $f$ at the point $c\in B_e$ as the linear mapping of $T_cB_e$ to itself defined by
\[
\mbox{Hess}f(c)[h]:=\nabla_h\mbox{grad}f
\]
In the terminology of shape optimization and with the definition of vector fields as in theorem \ref{covariant} and the identification of the boundary $c=\partial\Omega$ with its shape, we may identify this with a now so-called Riemannian shape Hessian.
\begin{definition}For the setting of theorem \ref{covariant}, we define the {\em Riemannian shape Hessian} as
\end{definition}
\[
\mbox{Hess}f(\Omega)[V]:=\nabla_V\mbox{grad}f
\]
The next theorem gives a correlation of the Riemannian shape Hessian with the standard shape Hessian which is defined by repeated shape differentiation.\begin{revision} It can be found, e.g., in the book \cite{Absil-book-2008} for the finite dimensional case. But at the level of abstraction on the current point of the paper, the arguments developed in \cite{Absil-book-2008} can be applied identically. The novelty here lies in the interpretation of $d(df(\Omega)[W])[V]$ as the ``classical'' shape Hessian.\end{revision}
\begin{revision}
\begin{theorem}\label{RSH}
The Riemannian shape Hessian based on the Riemannian metric $G$ satisfies the relation
\[
G(\mbox{Hess}f(\Omega)[V],W)=d(df(\Omega)[W])[V]-df(\Omega)[\nabla_VW]
\]
where $V,W$ are defined as in theorem \ref{covariant}  and $d(df(\Omega)[W])[V]$ denotes the standard shape Hessian as defined in \cite{Delfour-Zolesio-2001}. Furthermore, we observe the following symmetry
\[
G(\mbox{Hess}f(\Omega)[V],W)=G(V,\mbox{Hess}f(\Omega)[W]) 
\]
\end{theorem}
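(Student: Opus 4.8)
The plan is to obtain the first (characterizing) identity directly from the metric-compatibility of the Riemannian connection, and then to deduce the symmetry from the torsion-free property already established in theorem \ref{covariant} together with the elementary Lie-bracket identity for vector fields acting on functions. Throughout I would use the defining property of the Riemannian gradient, namely $G(\mbox{grad}f,W)=df(\Omega)[W]$ for every admissible $W$, which for the metric $G^A$ is readily checked from $\mbox{grad}f=g/(1+A\kappa_c^2)$ together with the boundary expression of $df$: indeed $\int_{\partial\Omega}(1+A\kappa_c^2)\,(g/(1+A\kappa_c^2))\,\beta\,ds=\int_{\partial\Omega}g\,\beta\,ds$.

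For the first identity I would start from the definition $\mbox{Hess}f(\Omega)[V]=\nabla_V\mbox{grad}f$ and apply the product rule for Riemannian connections (the same rule exploited in the proof of theorem \ref{covariant}) with the choice $k=\mbox{grad}f$ and $\ell=W$:
\[
V\bigl(G(\mbox{grad}f,W)\bigr)=G(\nabla_V\mbox{grad}f,W)+G(\mbox{grad}f,\nabla_VW).
\]
Using $G(\mbox{grad}f,W)=df(\Omega)[W]$, the left-hand side is the shape derivative of the scalar function $\Omega\mapsto df(\Omega)[W]$ in the direction $V$, i.e.\ $d(df(\Omega)[W])[V]$, which is precisely the classical shape Hessian. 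The last term is $G(\mbox{grad}f,\nabla_VW)=df(\Omega)[\nabla_VW]$. Rearranging and inserting the definition of the Hessian then yields the claimed relation.

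For the symmetry I would apply this relation once for the pair $(V,W)$ and once for $(W,V)$ and subtract. The difference of the classical-Hessian terms is $V(W(f))-W(V(f))$, which by the definition of the Lie bracket equals $[V,W](f)=df(\Omega)[[V,W]]$, while the difference of the correction terms is $df(\Omega)[\nabla_VW-\nabla_WV]$. By the torsion-free (symmetry) property proven in theorem \ref{covariant}, $\nabla_VW-\nabla_WV=[V,W]$, so the two differences cancel and
\[
G(\mbox{Hess}f(\Omega)[V],W)-G(\mbox{Hess}f(\Omega)[W],V)=0,
\]
which is the asserted symmetry, using that $G$ is itself symmetric in its two arguments.

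The main obstacle is conceptual rather than computational: one must justify that the interpretation of $d(df(\Omega)[W])[V]$ as the second shape derivative genuinely coincides with the abstract double directional derivative $V(W(f))$, so that the classical Lie-bracket identity $V(W(f))-W(V(f))=[V,W](f)$ is available verbatim. This is exactly the identification of tangent-vector actions with shape derivatives set up earlier in section \ref{sec2}; once it is granted, the symmetry is a pure consequence of torsion-freeness, and the well-known asymmetry of the classical shape Hessian is revealed to be compensated precisely by the covariant correction $df(\Omega)[\nabla_VW]$. A secondary point requiring some care is that $[V,W]$ need not be a normal field, but only its normal component enters $df(\Omega)[[V,W]]$ through the Hadamard structure theorem, which is consistent with the normal-component form of torsion-freeness obtained at the end of theorem \ref{covariant}.
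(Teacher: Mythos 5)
Your proposal is correct and follows essentially the same route as the paper: the paper's proof simply defers to the arguments of propositions 5.5.2 and 5.5.3 in \cite{Absil-book-2008}, which are precisely the metric-compatibility computation (giving the first identity) and the torsion-freeness/Lie-bracket cancellation (giving symmetry) that you spell out. You additionally make explicit the identification of $V(W(f))$ with the classical shape Hessian $d(df(\Omega)[W])[V]$, which is exactly the interpretive point the paper flags as the novelty of theorem \ref{RSH}.
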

\begin{proof}
The identical arguments as in the proofs of propositions 5.5.2 and 5.5.3 of \cite{Absil-book-2008} can be used.\end{proof}
\end{revision}

Now we can use the Riemannian shape Hessian in order to formulate a Taylor series expansion as well as optimality conditions. Since the subsequent theorem holds in more general cases than just for $B_e$, we formulate it in a more general notation. \begin{revision}Here, the key tool is the exponential map $\exp$, locally identifying the tangent space of a manifold with the manifold itself. It is defined in the usual differential geometric sense, which is also possible for shape manifolds, e.g.~\cite{MM-2006}.\end{revision}

\begin{theorem}\label{Taylor}
Let $({\cal N},G)$ be a Riemannian manifold with metric $G$ and norm $\norm{.}:=G(.,.)$. 
\begin{rrev}Let\end{rrev} the set $U\subset{\cal N}$ be a convex subset of ${\cal N}$. We consider all $x\in{\cal N}$ and $\Delta x\in T_x{\cal N}$ with $x,\exp_x(\Delta x)\in U$  and denote the parallel transport along the geodesic $\gamma:[0,1]\to \cN, t\mapsto \gamma(t):=\exp_x(t\Delta x)$ by $\cP_{\alpha,\beta}:T_{\gamma(\alpha)}\cN\to T_{\gamma(\beta)}\cN$.
We assume for the Riemannian Hessian of a function $f:U\to \R$  the following Lipschitz property at $x\in {\cal N}$:
\[
\|\cP_{1,0}\hess{}\, f(\exp_x(\Delta x))\cP_{0,1}-\hess{}\, f(x)\| \le L\|\Delta x\|\, , \quad\forall\, \exp_x(\Delta x)\in U
\]
with a constant $L<\infty$. Then, we \begin{rrev}arrive at\end{rrev} the estimation
\[
|f(\exp_x(\Delta x))-f(x)+G(\grad f(x),\Delta x)+\frac12G(\hess{}\, f(x)\Delta x,\Delta x)|\le\frac{L}{6}\|\Delta x\|^3
\]
\end{theorem}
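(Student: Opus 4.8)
The plan is to pull the statement back to the classical one-dimensional Taylor theorem by restricting $f$ to the geodesic $\gamma(t):=\exp_x(t\Delta x)$. I would set $\phi(t):=f(\gamma(t))$ for $t\in[0,1]$ and compute its derivatives. Since the velocity field of a geodesic is parallel, $\gamma'(t)=\cP_{0,t}\Delta x$ and hence $\norm{\gamma'(t)}=\norm{\Delta x}$; differentiating $\phi$ gives $\phi'(t)=G(\grad f(\gamma(t)),\gamma'(t))$. Differentiating once more and using compatibility of the Riemannian connection with $G$ together with the geodesic equation $\nabla_{\gamma'}\gamma'=0$ produces
\[
\phi''(t)=G\bigl(\hess{}\, f(\gamma(t))[\gamma'(t)],\gamma'(t)\bigr).
\]
At $t=0$ this yields $\phi'(0)=G(\grad f(x),\Delta x)$ and $\phi''(0)=G(\hess{}\, f(x)\Delta x,\Delta x)$, so the quantity inside the absolute value is exactly the second-order Taylor remainder $\phi(1)-\phi(0)-\phi'(0)-\tfrac12\phi''(0)$.

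Next I would apply the integral form of Taylor's theorem that needs only a second derivative, namely $\phi(1)=\phi(0)+\phi'(0)+\int_0^1(1-t)\phi''(t)\,dt$, and, since $\int_0^1(1-t)\,dt=\tfrac12$, rewrite the remainder as
\[
\phi(1)-\phi(0)-\phi'(0)-\tfrac12\phi''(0)=\int_0^1(1-t)\bigl(\phi''(t)-\phi''(0)\bigr)\,dt.
\]
It then suffices to estimate the integrand by the assumed Lipschitz property and integrate.

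The step requiring the most care, and the crux of the proof, is to recast $\phi''(t)$ so that the hypothesis---phrased through parallel-transported Hessians---applies verbatim. Because $\cP_{t,0}$ is a $G$-isometry with $\cP_{t,0}\gamma'(t)=\Delta x$, I would write $\phi''(t)=G(\cP_{t,0}\hess{}\, f(\gamma(t))\cP_{0,t}\Delta x,\Delta x)$, whence
\[
\phi''(t)-\phi''(0)=G\Bigl(\bigl(\cP_{t,0}\hess{}\, f(\gamma(t))\cP_{0,t}-\hess{}\, f(x)\bigr)\Delta x,\Delta x\Bigr).
\]
Cauchy--Schwarz bounds this by $\norm{\cP_{t,0}\hess{}\, f(\gamma(t))\cP_{0,t}-\hess{}\, f(x)}\,\norm{\Delta x}^2$. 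Here one must observe that $\gamma(t)=\exp_x(t\Delta x)\in U$ by convexity of $U$, and that parallel transport is invariant under reparametrization of the path, so that the hypothesis applies with $t\Delta x$ in place of $\Delta x$ and gives the bound $Lt\norm{\Delta x}^3$. Substituting into the remainder integral and evaluating $\int_0^1 t(1-t)\,dt=\tfrac16$ yields
\[
\Bigl|\phi(1)-\phi(0)-\phi'(0)-\tfrac12\phi''(0)\Bigr|\le L\norm{\Delta x}^3\int_0^1 t(1-t)\,dt=\frac{L}{6}\norm{\Delta x}^3,
\]
which is the asserted estimate (the terms inside the absolute value being the Taylor remainder). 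Aside from the parallel-transport bookkeeping and the verification that the geodesic remains in $U$, the argument is the scalar estimate of \cite{BockHxG1987a} transported along $\gamma$.
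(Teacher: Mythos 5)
Your proposal is correct and takes essentially the same approach as the paper: pull $f$ back to the geodesic, express the second derivative as $G\bigl((\cP_{t,0}\,\mathrm{Hess}\,f(\gamma(t))\,\cP_{0,t})\Delta x,\Delta x\bigr)$, and bound the integral form of the Taylor remainder using the Lipschitz hypothesis. The only cosmetic difference is that you use the weighted single-integral remainder $\int_0^1(1-t)\bigl(\phi''(t)-\phi''(0)\bigr)\,dt$ where the paper uses the equivalent double integral $\int_0^1\int_0^t\bigl(\varphi''(s)-\varphi''(0)\bigr)\,ds\,dt$; both evaluate to the same factor $1/6$.
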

\begin{proof}
Let us consider the mapping $\varphi:[0,1]\ni t\mapsto f(\exp_x(t\Delta x))$.
We note that for all differentiable functions and in particular for
$\varphi$ \begin{rrev}holds true\end{rrev}
\[
\int\limits_0^1\int\limits_0^t\varphi''(s)-\varphi''(0)ds\, dt =
\varphi(1)-\varphi(0)-\varphi'(0)-\frac12\varphi''(0)
\]
Since
\begin{align*}
\varphi(1)&=f(\exp_x(\Delta x))\, , \ \varphi(0)=f(x)\, , \ \varphi'(0)=G(\grad f(x),\Delta x)\, 
\\
\varphi''(0)&=G(\hess{}\, f(x)\Delta x,\Delta x)
\end{align*}
we observe
\begin{eqnarray*}
&\quad& |f(\exp_x(\Delta x))-f(x)+G(\grad f(x),\Delta x)+\frac12G(\hess{}\, f(x)\Delta x,\Delta x)| \\
&=&
\int\limits_0^1\int\limits_0^t|\varphi''(s)-\varphi''(0)|ds\, dt \\
&=& \int\limits_0^1\int\limits_0^t\left|\left(\left(\cP_{s,0}\hess{}\, f(\exp_x(s\Delta x))\cP_{0,s}-\hess{}\, f(x)\right)\Delta x,\Delta x\right)\right|ds\, dt\\
&\le& \int\limits_0^1\int\limits_0^t\|\cP_{s,0}\hess{}\, f(\exp_x(s\Delta x))\cP_{0,s}-\hess{}\, f(x)\|\,\|\Delta x\|^2 ds\, dt\\
&\le& \int\limits_0^1\int\limits_0^tsL\|\Delta x\|^3 ds\, dt = \frac{L}6\|\Delta x\|^3
\end{eqnarray*}
\end{proof}
 
Now, we can exploit the Taylor expansion of Theorem  \ref{Taylor} for
necessary and sufficient optimality conditions\index{optimality conditions}
\begin{theorem}\label{2nd-order} Under the assumptions of Theorem  \ref{Taylor} we obtain:
\begin{itemize}
\item[(a)] If $\hat{x}$ is an optimal solution, then $\hess{}\, f(\hat{x})\ge 0$, i.e.
           $\begin{revision}G\end{revision}(\hess{}\, f(\hat{x})h,h)\ge 0$, for all $h\in T_{\hat{x}}{\cal N}$
\item[(b)] If $\hat{x}$ satisfies $\grad f(\hat{x})=0$, and $\hess{}\, f(\hat{x})$ is coercive, i.e.
           $\begin{revision}G\end{revision}(\hess{}\, f(\hat{x})h,h)\ge c\|h\|^2$, for all $h\in T_{\hat{x}}{\cal N}$ and for some $c>0$, then
           $\hat{x}$ is a local minimum, provided \begin{rrev}that\end{rrev} $\hess{} f(\hat{x})$ satisfies a Lipschitz condition
           as in Theorem  \ref{Taylor}.
\end{itemize}
\end{theorem}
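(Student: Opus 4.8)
The plan is to obtain both statements directly from the second-order Taylor estimate of Theorem~\ref{Taylor}, exactly as in the classical finite-dimensional case, using the exponential map $\exp_{\hat{x}}$ to carry the analysis over to the tangent space $T_{\hat{x}}\cN$. Throughout I write the estimate in the form
\[
f(\exp_{\hat{x}}(\Delta x))=f(\hat{x})+G(\grad f(\hat{x}),\Delta x)+\tfrac12 G(\hess{}\,f(\hat{x})\Delta x,\Delta x)+R,\qquad |R|\le\tfrac{L}{6}\snorm{\Delta x}^3 .
\]

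For part (a), I would first record the first-order necessary condition $\grad f(\hat{x})=0$. If it failed, testing the estimate along the curve $\Delta x=-t\,\grad f(\hat{x})$, $t>0$ small, gives a leading term $-t\snorm{\grad f(\hat{x})}^2$ that dominates both the quadratic term (order $t^2$) and the remainder (order $t^3$), so $f$ would strictly decrease, contradicting optimality. With $\grad f(\hat{x})=0$ in hand, I take an arbitrary fixed $h\in T_{\hat{x}}\cN$ and specialise to $\Delta x=th$, obtaining
\[
f(\exp_{\hat{x}}(th))-f(\hat{x})=\tfrac{t^2}{2}G(\hess{}\,f(\hat{x})h,h)+R(t),\qquad |R(t)|\le\tfrac{L}{6}t^3\snorm{h}^3 .
\]
Since $\hat{x}$ is a local minimum the left-hand side is nonnegative for all small $t>0$; dividing by $t^2/2$ and letting $t\to 0^{+}$ annihilates the remainder and yields $G(\hess{}\,f(\hat{x})h,h)\ge 0$. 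As $h$ is arbitrary this is precisely $\hess{}\,f(\hat{x})\ge 0$.

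For part (b), I would substitute the hypothesis $\grad f(\hat{x})=0$ into the estimate and bound the quadratic term below by coercivity, $G(\hess{}\,f(\hat{x})\Delta x,\Delta x)\ge c\snorm{\Delta x}^2$, giving
\[
f(\exp_{\hat{x}}(\Delta x))-f(\hat{x})\ge\tfrac{c}{2}\snorm{\Delta x}^2-\tfrac{L}{6}\snorm{\Delta x}^3=\snorm{\Delta x}^2\Bigl(\tfrac{c}{2}-\tfrac{L}{6}\snorm{\Delta x}\Bigr).
\]
For $0<\snorm{\Delta x}<3c/L$ the bracket is strictly positive, so $f(\exp_{\hat{x}}(\Delta x))>f(\hat{x})$ there, and since $\exp_{\hat{x}}$ maps a neighbourhood of $0\in T_{\hat{x}}\cN$ onto a neighbourhood of $\hat{x}$, this exhibits $\hat{x}$ as a strict local minimum.

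The computations are otherwise routine, and I expect the only delicate point to be the translation between optimality ``on the manifold'' and the inequalities phrased through $\exp_{\hat{x}}$: I would use the convexity of $U$ together with the standard properties of the exponential map to guarantee that the test curves $t\mapsto\exp_{\hat{x}}(th)$ remain in $U$ for small $t$, so that the Lipschitz hypothesis of Theorem~\ref{Taylor}, and hence the Taylor estimate itself, stays valid along them. This is the step most sensitive to the infinite-dimensional shape-manifold setting, since it requires $\exp_{\hat{x}}$ to be defined on a sufficiently large ball; for the metric $G^A$ with $A>0$ this is exactly what the Riemannian structure of Section~\ref{sec2} supplies.
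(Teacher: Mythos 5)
Your proposal is correct and is essentially the paper's own proof: the paper simply states that ``the proof is identical with the standard proof in linear spaces,'' and what you have written out is precisely that standard argument, transported to the manifold via $\exp_{\hat{x}}$ and the Taylor estimate of Theorem~\ref{Taylor} (note that you also silently corrected the sign typo in the paper's statement of that estimate). Your closing caveat about $\exp_{\hat{x}}$ covering a neighbourhood of $\hat{x}$ in the infinite-dimensional setting is a genuine subtlety that the paper glosses over as well, so it does not constitute a deviation from the paper's route.
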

\begin{proof}
The proof is identical with the standard proof in linear spaces.
\end{proof}

\medskip

\begin{example}\normalfont
Now let us study a particular example, one of the simplest but nevertheless instructive shape optimization problems 
\[
\min\limits_{\Omega\subset\R^2}f(\Omega):=\int_\Omega\psi(x)dx
\]
where $\psi:\R^2\to\R$ is a sufficiently smooth scalar valued function. We use the vector field definitions of theorem \ref{covariant}. The shape derivative of this objective and the $G^A$-gradient are
\begin{align*}
df(\Omega)[V]&=\int_{\partial\Omega}\psi\scp{V,\vec{n}} ds\\
\mbox{grad}f(c)&=\frac{\psi}{1+A\kappa_c^2}\, ,\qquad c=\partial\Omega
\end{align*}
\begin{revision}With the notation of theorem \ref{covariant}\end{revision}, the standard shape Hessian is computed by shape differentiating the shape derivative
\begin{align*}
d(df(\Omega)[W])[V]&=d(\int_{\partial\Omega}\psi k ds)[h]=
\int_{\partial\Omega}\frac{\partial(\psi k) }{\partial\vec{n}}h+\kappa_c \psi khds\\
&=\int_{\partial\Omega}\frac{\partial\psi}{\partial\vec{n}}kh+\psi\frac{\partial k}{\partial\vec{n}}h+\kappa_c \psi khds\\
&=\int_{\partial\Omega}(\frac{\partial\psi}{\partial\vec{n}}+\kappa_c\psi)\scp{W,\vec{n}}\scp{V,\vec{n}}+\psi \scp{DW\, V,\vec{n}}ds
\end{align*}
\begin{rrev}It should be noted that again $k$ and $h$ are the function representatives of $W$ and $V$.\end{rrev}
We observe that the standard shape Hessian is not symmetric. In contrast to that, the Riemannian shape Hessian computed by application of theorem \ref{RSH} is given by
\begin{align}\nonumber
G^A(\mbox{Hess}f&(\Omega)[V],W)=d(df(\Omega)[W])[V]-df(\Omega)[\nabla_VW]\\\nonumber
&=\int_{\partial\Omega}(\frac{\partial\psi}{\partial\vec{n}}+\kappa_c\psi)\scp{W,\vec{n}}\scp{V,\vec{n}}+\psi \scp{DW\, V,\vec{n}}ds -\int_{\partial\Omega}\psi\scp{\nabla_VW,\vec{n}} ds\\\nonumber
&=\int_{\partial\Omega}(\frac{\partial\psi}{\partial\vec{n}}+\kappa_c\psi)\scp{W,\vec{n}}\scp{V,\vec{n}}+\psi \scp{DW\, V,\vec{n}}ds\\\nonumber
&\hspace{1.0cm}-\int_{\partial\Omega}\psi(\scp{DW\,V,\vec{n}}+\frac{\psi}2
(\kappa_c+\frac{2A\kappa_c^3}{1+A\kappa_c^2})\scp{V,\vec{n}}\scp{W,\vec{n}}
\\\nonumber
&\hspace{1.0cm}+\psi A\kappa_c(\scp{V,\vec{n}}\scp{W,\vec{n}})_{\tau\tau}ds\\\label{R-Hess}
&=\int_{\partial\Omega}(\frac{\partial\psi}{\partial\vec{n}}+\frac{\kappa_c}{2}\psi
-\frac{A\kappa_c^3}{1+A\kappa_c^2}\psi)\scp{V,\vec{n}}\scp{W,\vec{n}}-\psi A\kappa_c(\scp{V,\vec{n}}\scp{W,\vec{n}})_{\tau\tau}ds
\end{align}
Now, as already abstractly shown in corollary \ref{RSH}, we observe symmetry of the Riemannian shape Hessian also in this example. We will study this example in more specific details in section \ref{numex}.
\end{example}

\section{Convergence of Riemannian Newton Methods}\label{sec:Newton}
Now we formulate a contraction result for Newton iterations on manifolds which is in line with linear space theorems in \cite{BockHxG1987a,Deufl-Newton} \begin{revision} for Newton-like methods
\begin{equation}\label{Newton}
x^{k+1}=x^k-M_kF(x^k)
\end{equation}
where $M_k$ is an approximation of the inverse of the derivative of the function $F$, which defines a root finding problem $F(x)=0$\end{revision}. 
We \begin{rrev}prove\end{rrev} convergence properties for Newton-like methods on Riemannian manifolds, but have always in mind that in this paper we want to solve a particular root finding problem for the gradient of an objective \begin{rrev}functional\end{rrev}. Thus, every time the Jacobian is mentioned, we can think it as the Jacobian of the gradient and thus the Hessian of an objective on a Riemannian manifold.

\begin{theorem}\label{contraction}
We consider a complete Riemannian Manifold $(\cN,G)$ with norm $\norm{.}:=G(.,.)$ on the tangential bundle $T\cN$. The set $D\subset\cN$ is assumed to be \begin{rrev}simply\end{rrev} connected and open. We are seeking a singular point of the twice differentiable vector field $F:\cN\to T\cN$ by employing a Newton method on manifolds. The symbol $J$ denotes the covariant derivative of $F$ such that $J(x)v:=\nabla_vF_x$. Furthermore, we assume that for all points $x,y\in D$ with $y=\exp_x(\Delta x), \Delta x:=-M(x)F(x)$ and $M(x)\in\End(T_x\cN))$ and invertible  and all $t\in [0,1]$, the following Lipschitz conditions are satisfied along the geodesic $\gamma:[0,1]\to \cN, t\mapsto \gamma(t):=\exp_x(-tM(x)F(x))$ with parallel transport $\cP_{\alpha,\beta}:T_{\gamma(\alpha)}\cN\to T_{\gamma(\beta)}\cN$.
\begin{itemize}
\item[(1)] There exists $\omega<\infty$ with
\[
\norm{M(y)(\cP_{t,1}J(\gamma(t))\cP_{0,t}-\cP_{0,1}J(x))\Delta x)}\le\omega t \norm{\Delta x}^2
\]
\item[(2)] There is a constant upper limit $\kappa <1$ for the function $\tilde{\kappa}(x)$ in 
\[
\norm{M(y)\cP_{0,1}(F(x)+J(x)\Delta x)}=: \tilde{\kappa}(x)\norm{\Delta x}
\]
with $\tilde{\kappa}(x)\le \kappa$.
\end{itemize}
If $x_0$ satisfies $\delta_0<1$, where $\delta_k:=\kappa+\frac{\omega}{2}\norm{M(x_k)F(x_k)}, k=0,1,\ldots$, then follows
\begin{itemize}
\item[(1)] The iteration $x_{k+1}:=\exp_{x_k}(-M(x_k)F(x_k))$ is well defined and stays in $D_0:=\{x\in D\,|\, d(x,x_0)\le\norm{M(x_0)F(x_0)}/(1-\delta_0)\}$.
\item[(2)] There exists $\hat{x}\in D_0$ with $\lim\limits_{k\to\infty}x^k=\hat{x}$ in the sense that 
$\lim\limits_{k\to\infty} d_{G}(x^k,\hat{x})=0$, where $d_G$ is the geodesic distance.
\item[(3)] There holds the a priori estimation
\[
d_{G}(x^{k},\hat{x})\le\frac{\delta_k}{1-\delta_k}\norm{\Delta x_k}\le \frac{\delta_0^k}{1-\delta_0}\norm{\Delta x_k}
\]
where $\Delta x_k:=-M(x^k)F(x^k)$.
\item[(4)] There yields the contraction property
\[
\norm{\Delta x_{k+1}}\le\delta_k\norm{\Delta x_k}=(\kappa+\frac{\omega}{2}\norm{\delta x_k})\norm{\Delta x_k}
\]
\item[(5)] If the mapping $x\mapsto M(x)$ is continuous and $M(\hat{x})$ is nonsingular, then $\hat{x}$ is not only a fixed point but rather a root of the equation $F(x)=0$. 
\end{itemize}
\end{theorem}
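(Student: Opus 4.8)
The plan is to transplant Bock's affine-covariant Newton--Mysovskikh argument from linear spaces to the manifold $\cN$, the only genuinely new ingredient being that all quantities living in different tangent spaces must be compared through the parallel transport $\cP_{\alpha,\beta}$ along the Newton geodesic. The heart of the proof is therefore an integral remainder representation of the transported residual. Fixing $k$, I set $\Delta x_k=-M(x_k)F(x_k)$ and $\gamma(t)=\exp_{x_k}(t\Delta x_k)$, so that $\gamma(0)=x_k$ and $\gamma(1)=x_{k+1}$. Since $\gamma$ is a geodesic its velocity is parallel, $\dot\gamma(t)=\cP_{0,t}\Delta x_k$, and along $\gamma$ one has $\tfrac{D}{dt}F(\gamma(t))=\nabla_{\dot\gamma(t)}F=J(\gamma(t))\,\dot\gamma(t)=J(\gamma(t))\cP_{0,t}\Delta x_k$. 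Using the identity $\tfrac{d}{dt}\bigl[\cP_{t,0}F(\gamma(t))\bigr]=\cP_{t,0}\tfrac{D}{dt}F(\gamma(t))$, integrating from $0$ to $1$ and then applying $\cP_{0,1}$, I obtain
\[
F(x_{k+1})=\cP_{0,1}\bigl(F(x_k)+J(x_k)\Delta x_k\bigr)+\int_0^1\bigl(\cP_{t,1}J(\gamma(t))\cP_{0,t}-\cP_{0,1}J(x_k)\bigr)\Delta x_k\,dt,
\]
where I have added and subtracted the constant-in-$t$ term $\cP_{0,1}J(x_k)\Delta x_k$. The virtue of this formula is that the two operators appearing in it are exactly those controlled by hypotheses~(2) and~(1).

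Applying $M(x_{k+1})$, taking norms and using the triangle inequality then splits $\norm{\Delta x_{k+1}}=\norm{M(x_{k+1})F(x_{k+1})}$ into precisely these two pieces: the first is bounded by $\kappa\norm{\Delta x_k}$ via the compatibility condition~(2), and the integrand by $\omega t\norm{\Delta x_k}^2$ via the Lipschitz condition~(1), whose $t$-integral contributes $\tfrac{\omega}{2}\norm{\Delta x_k}^2$. This yields the contraction estimate $\norm{\Delta x_{k+1}}\le(\kappa+\tfrac{\omega}{2}\norm{\Delta x_k})\norm{\Delta x_k}=\delta_k\norm{\Delta x_k}$, which is assertion~(4) and drives everything else.

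From here the reasoning is the familiar contraction bookkeeping. First I would show by induction that $\delta_{k+1}\le\delta_k\le\delta_0<1$: since $\delta_0<1$ forces $\norm{\Delta x_{k+1}}\le\norm{\Delta x_k}$, the quantities $\delta_k=\kappa+\tfrac{\omega}{2}\norm{M(x_k)F(x_k)}$ are nonincreasing, which gives the geometric decay $\norm{\Delta x_k}\le\delta_0^k\norm{\Delta x_0}$. Because the Newton geodesic joining $x_k$ to $x_{k+1}$ has length $\norm{\Delta x_k}$, one has $d_G(x_k,x_{k+1})\le\norm{\Delta x_k}$, and summing the geometric series yields $d_G(x_0,x_k)\le\norm{\Delta x_0}/(1-\delta_0)$; this is exactly the simultaneous induction establishing that every iterate is defined and remains in $D_0$ (assertion~(1)), where the hypotheses are available. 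The same telescoping estimate $d_G(x_k,x_{k+m})\le\sum_{j\ge 0}\delta_k^{\,j}\norm{\Delta x_k}$ shows $(x_k)$ is Cauchy; completeness of $(\cN,G)$ and closedness of $D_0$ deliver a limit $\hat x\in D_0$ (assertion~(2)), and letting $m\to\infty$ gives the a~priori bound of assertion~(3). Finally, for assertion~(5), $\norm{\Delta x_k}=\norm{M(x_k)F(x_k)}\to0$ together with continuity of $x\mapsto M(x)$ and nonsingularity of $M(\hat x)$ forces $F(\hat x)=0$ in the limit.

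I expect the main obstacle to be not any single estimate but the careful handling of the parallel transport in the remainder formula, ensuring that every operator acts between the correct tangent spaces so that conditions~(1)--(2) apply verbatim, together with the delicate part of the induction in assertion~(1): verifying that the geodesics and iterates never leave $D$ (equivalently that $D_0\subset D$), so that the Lipschitz hypotheses remain valid throughout. Once the transported residual identity is in place, every remaining step is formally identical to the linear-space proof in \cite{BockHxG1987a}.
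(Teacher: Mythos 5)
Your proposal is correct and follows essentially the same route as the paper's own proof: the same parallel-transported fundamental-theorem-of-calculus remainder identity along the Newton geodesic, the same splitting into the $\kappa$- and $\omega$-controlled terms to obtain the contraction estimate, and the same triangle-inequality/geometric-series bookkeeping for staying in $D_0$, the Cauchy property, the a priori bound, and the limiting argument for assertion (5). The only difference is presentational: you spell out the derivation of the integral remainder formula (which the paper cites from \cite{Ferreira-2002}) and you flag the $D_0\subset D$ issue explicitly, which the paper passes over silently.
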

\begin{proof}The proof follows the standard lines---now in manifold notation. First, we note the manifold variant of the fundamental theorem of calculus along the geodesic $\gamma$ for any $C^1$ vector field $X$ (cf.~\cite{Ferreira-2002}):
\[
X(\gamma(t))=\cP_{0,t}X(\gamma(0)+\int\limits_0^t\cP_{s,t}\nabla_{\dot{\gamma}(s)}X(\gamma(s))ds = 
\cP_{0,t}X(\gamma(0)+\int\limits_0^t\cP_{s,t}DX(\gamma(s))\dot{\gamma}(s)ds
\]
and have in mind that obviously $\dot{\gamma}(s)=\cP_{0,s}\Delta x^k$. 
We show the contraction property and use the abbreviation $R(x^k):=F (x^k) - J (x^k) M(x^k)F (x^k)$
\begin{align*}
\norm{\Delta x^{k+1}} &= \norm{M(x^{k+1})F(x^{k+1})}\\
&=\norm{M (x^{k+1}) \cP_{0,1}R(x^k) + {\color{black}M (x^{k+1}) }\bigl[ F (x^{k+1}) - \cP_{0,1}R(x^k) \bigr]}\\
&\le \kappa\norm{\Delta x^k}+\norm{M(x^{k+1})\int\limits_0^1\cP_{t,1}\begin{revision}J\end{revision}(\gamma(t))\cP_{0,t}\Delta x^k-\cP_{0,1}J(x^k)\Delta x^kdt}\\
&\le \kappa\norm{\Delta x^k}+\int\limits_0^1\norm{M(x^{k+1})(\cP_{t,1}\begin{revision}J\end{revision}(\gamma(t))\cP_{0,t}-\cP_{0,1}J(x^k))\Delta x^k}dt\\
&\le \kappa\norm{\Delta x^k}+\int\limits_0^1 t\omega\norm{\Delta x^k}^2dt=\kappa\norm{\Delta x^k}+\frac{\omega}{2}\norm{\Delta x^k}^2=
\delta_k\norm{\Delta x^k}
\end{align*}
Now we conclude inductively that $\delta_k<1$ for all $k$ and thus the series $\{\delta_k\}_{k=0}^\infty$ and $\{\norm{\Delta x_k}\}_{k=0}^\infty$ are montonically decreasing. Now, we show that the series $\{x_k\}_{k=0}^\infty$ stays in $D_0$. We use the triangle inequality which holds because of the metric properties of the Riemannian metric.
\begin{align}\label{initial.1}
d_G(x^{k},x^0) &=d_G(\exp(\Delta x_{k-1})\circ\ldots\circ\exp(\Delta x_{1})\circ\exp_{x_0}(\Delta x_0),x_0)\\\label{initial.2}
 &\le \sum\limits^{k-1}_{j=1} \norm{ \Delta x^j} \le \sum\limits^{k-1}_{j=1} \delta_0^j\norm{ \Delta x^0}\le
 \frac{\norm{\Delta x_0}}{1-\delta_0}\qquad   \mbox{(geometric series)} 
\end{align}
Analogously, we show the Cauchy property of the series $\{x_k\}_{k=0}^\infty$. 
\begin{align} \label{Cauchy.1}
d_G(x^m,x^n)  &= d(\exp(\Delta x_{m-1})\circ\ldots\circ\exp(\Delta x_{n+1})\circ\exp_{x_n}(\Delta x_n),x_n) \\\label{Cauchy.2}
            &\leq \sum\limits^{m-1}_{k=n} \norm{ \Delta x^k} 
                            \leq \sum\limits^{m-1}_{k=n} \, \delta_n^k  
                            \norm{\Delta x^n}                              
                       \leq   \sum\limits^{m-1}_{k=n} \, \delta_0^k  
                            \norm{ \Delta x^0} \\
&\leq \varepsilon\ \mbox{for any }\  \varepsilon>0\mbox{, if }n, m \to \infty 
\end{align}
The a priori estimation is yet another application of the triangle inequality. 
Now, if $ M (.) $ is continuous and $ x^k \to \hat{x}$, we pass the defining equation
\[
x_{k+1}:=\exp_{x_k}(-M(x_k)F(x_k))
\] to the limit, having in mind that $\exp$ is continuous and $\exp_y(z)=y$ implies $z=0$ for any $y$. Thus, we observe that
$M(\hat{x})F(\hat{x})=0$. Finally, if $M(\hat{x})$ is nonsingular, we conclude that $F(\hat{x})=0$

\end{proof}

Now, we proof quadratic convergence for the exact Newton method--again completely parallel to the discussions in
\cite{BockHxG1987a,Deufl-Newton}.
\begin{corollary}\label{quadratic} Together with the assumptions of theorem \ref{contraction}, we choose $M(x)=\begin{revision}J\end{revision}(x)^{-1}$, thus defining the exact Newton method on manifolds. The resulting iteration converges locally quadratically, i.e., there is a $\tilde{k}\in \N$ and a $C<\infty$ such that 
\[
d_G(x^{k+1},\hat{x})\le Cd_G(x^{k},\hat{x})^2\, , \quad\forall k\ge\tilde{k}
\]
where $d_G$ denotes again the geodesic distance.
\end{corollary}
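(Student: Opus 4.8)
The plan is to specialize Theorem~\ref{contraction} to the exact Newton choice $M(x)=J(x)^{-1}$ and then to translate the resulting quadratic decay of the Newton increments into a quadratic decay of the geodesic distance to the limit $\hat{x}$. The decisive observation is that for $M(x)=J(x)^{-1}$ the residual appearing in assumption~(2) of Theorem~\ref{contraction} vanishes identically: since $\Delta x_k=-J(x^k)^{-1}F(x^k)$ one has $F(x^k)+J(x^k)\Delta x_k=0$, so that $\tilde{\kappa}(x)\equiv 0$ and we may take $\kappa=0$. Substituting this into the contraction estimate~(4) yields $\delta_k=\frac{\omega}{2}\norm{\Delta x_k}$ and hence the purely quadratic bound $\norm{\Delta x_{k+1}}\le\frac{\omega}{2}\norm{\Delta x_k}^2$, which already establishes quadratic convergence measured in the Newton increments. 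It therefore remains to pass from $\norm{\Delta x_k}$ to $d_G(x^k,\hat{x})$.

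For one direction I would invoke the a priori estimate~(3) of Theorem~\ref{contraction}, $d_G(x^{k+1},\hat{x})\le\frac{\delta_{k+1}}{1-\delta_{k+1}}\norm{\Delta x_{k+1}}$. As the sequence $\{\norm{\Delta x_k}\}$ decreases monotonically to zero (shown in the proof of Theorem~\ref{contraction}), so does $\{\delta_k\}$, and there is a $\tilde{k}\in\N$ with $\delta_k\le\frac12$ for $k\ge\tilde{k}$; for such $k$ the factor $\frac{\delta_{k+1}}{1-\delta_{k+1}}\le 1$ and thus $d_G(x^{k+1},\hat{x})\le\norm{\Delta x_{k+1}}$. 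The reverse direction is the step requiring most care. For $k$ large the Newton segment $t\mapsto\exp_{x^k}(t\Delta x_k)$ lies in a normal neighborhood and is minimizing, so that $d_G(x^k,x^{k+1})=\norm{\Delta x_k}$, and the triangle inequality for $d_G$ gives
\[
\norm{\Delta x_k}\le d_G(x^k,\hat{x})+d_G(x^{k+1},\hat{x})\le d_G(x^k,\hat{x})+\tfrac{\omega}{2}\norm{\Delta x_k}^2 .
\]
Enlarging $\tilde{k}$ so that $\frac{\omega}{2}\norm{\Delta x_k}\le\frac12$, the quadratic term is absorbed into the left-hand side, leaving $\norm{\Delta x_k}\le 2\,d_G(x^k,\hat{x})$.

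Chaining the three estimates then finishes the proof: for all $k\ge\tilde{k}$,
\[
d_G(x^{k+1},\hat{x})\le\norm{\Delta x_{k+1}}\le\tfrac{\omega}{2}\norm{\Delta x_k}^2\le 2\omega\,d_G(x^k,\hat{x})^2 ,
\]
so the assertion holds with $C:=2\omega$. I expect the genuine obstacle to be precisely this equivalence of the two distance measures $\norm{\Delta x_k}$ and $d_G(x^k,\hat{x})$: the a priori estimate supplies one inequality cheaply, but the reverse relies on the Newton step realizing the geodesic distance, which is only guaranteed once the iterates have entered a normal neighborhood of $\hat{x}$---hence the restriction to $k\ge\tilde{k}$ and the reliance on completeness and on the local behavior of $\exp$ already secured in Theorem~\ref{contraction}.
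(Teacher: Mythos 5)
Your proof is correct and follows essentially the same route as the paper's: both set $\kappa=0$ for the exact Newton choice, obtain the quadratic contraction $\norm{\Delta x_{k+1}}\le\frac{\omega}{2}\norm{\Delta x_k}^2$, and then pass between $\norm{\Delta x_k}$ and $d_G(x^k,\hat{x})$ using the a priori estimate in one direction and the triangle inequality together with $d_G(x^k,x^{k+1})=\norm{\Delta x_k}$ in the other. The only differences are cosmetic (threshold $\delta_k\le\frac12$ versus the paper's $\frac14$, and $C=2\omega$ versus $\frac32\omega$); if anything, you are more careful than the paper, which justifies $d_G(x^k,x^{k+1})=\norm{\Delta x_k}$ merely by calling the exponential map an isometry, whereas you correctly flag that this needs the Newton geodesic to be minimizing.
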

\begin{proof} Because of the choice $M(x)=\begin{revision}J\end{revision}(x)^{-1}$, we observe for $\kappa$ in theorem \ref{contraction} from 
\[
\norm{M(y)\cP_{0,1}(F(x)+J(x)\Delta x)}=\norm{M(y)\cP_{0,1}(F(x)-J(x)J(x)^{-1}F(x))}=0
\]
And thus $\kappa=0$.
Since $\{\delta_k\}_{k=0}^\infty$ is monotonically decreasing to zero, there is $\tilde{k}$ such that $\delta_k=\frac{\omega}{2}\norm{\Delta{x^k}}\le\frac14$ for all $k\ge \tilde{k}$, which implies also $\delta_k/(1-\delta_k)\le 1/3$ for all $k\ge \tilde{k}$.
From the fact that the exponential function is an isometry and from the a priori estimation in theorem \ref{contraction}, we observe
\begin{align*}
\norm{\Delta{x^k}}&=d_G(x^{k+1}, x^k)\le d_G(x^{k+1}, \hat{x})+d_G(x^{k}, \hat{x})\le\frac{\delta_k}{1-\delta_k}\norm{\Delta x^k}+d_G(x^{k}, \hat{x})\\
&\le \frac13\norm{\Delta x^k}+d_G(x^{k}, \hat{x})
\end{align*}
and therefore $\norm{\Delta x^k}\le\frac32d(x^{k}, \hat{x})$. Now, we use again the a priori estimation
\[
d_G(x^{k+1},\hat{x})\le \frac{\delta_{k+1}}{1-\delta_{k+1}}\norm{\Delta x_{k+1}}\le\frac43\frac{\omega}{2}\norm{\Delta x^k}^2
\le\frac32\omega d_G(x^{k},\hat{x})^2
\]
\end{proof}

In shape optimization, it is very rare that one can get hold of $J(x^k)$ (i.e., the Hessian of an objective) away from the optimal solution. However, in many cases, expressions can be derived, which deliver the exact Hessian, if evaluated at the solution. That means that often the situation occurs that an approximation $M_k^{-1}$ of $J(x^k)$ is available with the property $M_k^{-1}\to \begin{revision}J\end{revision}(\hat{x}), k\to\infty$. We show local superlinear convergence in those cases. 
\begin{corollary}\label{superlinear}Together with the assumptions of theorem \ref{contraction}, we choose $M_k:=M(x^k)$ such that 
\[
\frac{\norm{[M_k^{-1}-\begin{revision}J\end{revision}(x^k)]\Delta x^k}}{\norm{\Delta x^k}}\to 0, \quad k\to\infty
\]
\begin{revision}If there is a constant $C<\infty$ such that the approximation $M_k$ is uniformly bounded, $\norm{M_k}\le C, \forall k$, then the resulting Newton-like iteration (\ref{Newton}) converges locally superlinearly.\end{revision}
\end{corollary}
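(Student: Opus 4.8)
The plan is to show that the Dennis--Moré type hypothesis forces the residual factor $\tilde\kappa(x^k)$ from condition (2) of Theorem \ref{contraction} to tend to zero, and then to feed this vanishing factor into the contraction machinery already established in that theorem. Once $\tilde\kappa(x^k)\to 0$, the contraction estimate degenerates into a superlinear one for the Newton steps, and a triangle-inequality argument identical to the one in Corollary \ref{quadratic} transports this to the geodesic distance $d_G(x^k,\hat x)$.

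First I would record the algebraic identity that links the two conditions. Since $\Delta x^k=-M_kF(x^k)$ and $M_k$ is invertible, we have $F(x^k)=-M_k^{-1}\Delta x^k$, so that the residual appearing in condition (2) becomes
\[
F(x^k)+J(x^k)\Delta x^k=-(M_k^{-1}-J(x^k))\Delta x^k .
\]
Because the defining point $y$ in condition (2) is precisely $x^{k+1}=\exp_{x^k}(\Delta x^k)$, and because parallel transport $\cP_{0,1}$ is a Riemannian isometry, the uniform operator bound $\norm{M_{k+1}}\le C$ yields
\[
\tilde\kappa(x^k)\norm{\Delta x^k}=\norm{M_{k+1}\cP_{0,1}(M_k^{-1}-J(x^k))\Delta x^k}\le C\,\norm{(M_k^{-1}-J(x^k))\Delta x^k}.
\]
Dividing by $\norm{\Delta x^k}$ and invoking the hypothesis gives $\tilde\kappa(x^k)\le C\,\norm{(M_k^{-1}-J(x^k))\Delta x^k}/\norm{\Delta x^k}\to 0$.

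Next I would revisit the contraction estimate in the proof of Theorem \ref{contraction}, but retain the pointwise factor $\tilde\kappa(x^k)$ in place of its uniform bound $\kappa$. This sharpens the estimate to $\norm{\Delta x^{k+1}}\le\bar\delta_k\norm{\Delta x^k}$ with $\bar\delta_k:=\tilde\kappa(x^k)+\frac\omega2\norm{\Delta x^k}$. Since the assumptions of Theorem \ref{contraction} are in force, that theorem already furnishes $x^k\to\hat x$ and $\norm{\Delta x^k}\to 0$; together with $\tilde\kappa(x^k)\to 0$ this gives $\bar\delta_k\to 0$, hence $\norm{\Delta x^{k+1}}/\norm{\Delta x^k}\to 0$, i.e. superlinear convergence of the Newton steps. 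To convert this into the claimed distance estimate I would copy the triangle-inequality step from Corollary \ref{quadratic}: from $\norm{\Delta x^k}=d_G(x^{k+1},x^k)\le d_G(x^{k+1},\hat x)+d_G(x^k,\hat x)$ and the a priori bound $d_G(x^{k+1},\hat x)\le\frac{\bar\delta_{k+1}}{1-\bar\delta_{k+1}}\norm{\Delta x^{k+1}}$ one obtains $\norm{\Delta x^k}\le(1+o(1))\,d_G(x^k,\hat x)$, and chaining the a priori bound with the step contraction then delivers $d_G(x^{k+1},\hat x)\le C_k\,d_G(x^k,\hat x)$ with $C_k\to 0$.

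The main obstacle I anticipate is bookkeeping rather than conceptual. One must verify that the sharpened factor $\bar\delta_k$ is admissible in the a priori estimate of Theorem \ref{contraction} (whose geometric-series argument was phrased with the uniform $\delta_0$), and that the passage from step-superlinearity to distance-superlinearity does not secretly require a uniform lower bound relating $\norm{\Delta x^k}$ and $d_G(x^k,\hat x)$ that could degenerate near $\hat x$. Both points are settled by the very triangle-inequality estimate used for quadratic convergence, which shows $\norm{\Delta x^k}$ and $d_G(x^k,\hat x)$ are asymptotically comparable with ratio tending to $1$, so the vanishing of $\bar\delta_k$ is what governs the rate.
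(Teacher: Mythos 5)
Your proposal is correct and follows essentially the same route as the paper's own proof: rewrite $F(x^k)+J(x^k)\Delta x^k=-(M_k^{-1}-J(x^k))\Delta x^k$, use the uniform bound on $M_{k+1}$ and the isometry of parallel transport to get $\tilde\kappa(x^k)\to 0$, and then recycle the a priori/triangle-inequality argument from Corollary \ref{quadratic} to pass from step contraction to $d_G(x^{k+1},\hat x)\le\delta_k\, d_G(x^k,\hat x)$ with $\delta_k\to 0$. Your write-up is in fact slightly more careful than the paper's, since you make explicit the replacement of the uniform $\kappa$ by the pointwise $\tilde\kappa(x^k)$ in $\delta_k$ and you correct a sign slip the paper makes in the identity for the residual.
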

\begin{proof} We observe for $\kappa_k:=\kappa(x^k)$  in theorem \ref{contraction}
\begin{align*}
\kappa_k&=\frac{\norm{M_{k+1}\cP_{0,1}(F(x^k)+J(x^k)\Delta x^k)}}{\norm{\Delta x^k}}=
\frac{\norm{M_{k+1}\cP_{0,1}(M_k^{-1}+J(x^k))\Delta x)}}{\norm{\Delta x^k}}\\
&\le\frac{\norm{M_{k+1}}\norm{\cP_{0,1}(M_k^{-1}+J(x^k))\Delta x)}}{\norm{\Delta x^k}}\to 0
\end{align*}
We use again the apriori estimation in an analogous fashion as in the proof of corollary \ref{quadratic} to obtain
\[
d_G(x^{k+1},\hat{x})\le \frac{\delta_{k+1}}{1-\delta_{k+1}}\frac{1-2\delta_k}{1-\delta_k}d_G(x^{k+1},\hat{x})
\le \frac{1-2\delta_k}{(1-\delta_k)^2}\delta_k d_G(x^{k+1},\hat{x})
\]
Since $0\le\delta_k^2$, we observe that $(1-2\delta_k)/(1-\delta_k)^2\le 1$ which \begin{revision} gives\end{revision}
\[
d_G(x^{k+1},\hat{x})\le\delta_k d_G(x^{k},\hat{x})\mbox{ with } \delta_k\to 0,\quad k\to \infty
\]
\end{proof}

{\sc Remarks:} 
\begin{itemize}
\item[-] If we can assume that the quality of the Hessian approximation satisfies even 
$\norm{M_k^{-1}+J(x^k)}\le C\norm{\Delta x^k}$ then we observe also quadratic convergence as an immediate consequence of an obvious refinement of the proof of corollary \ref{superlinear}.
\item[-] Note that the condition in corollary \ref{superlinear} is similar to the Dennis-Mor\'e condition \cite{Dennis-More-1974,Byrd-Nocedal_1989}, which is also applicable for quasi-Newton update techniques on Riemannian manifolds \begin{revision}\cite{Qi-2011,Absil-Dennis-More}\end{revision}.
\end{itemize}

The application of the exponential mapping within the Newton method is an expensive operation. It is recommendable to replace this step by a so-called {\em retraction} mapping 
\cite{Absil-book-2008,Udriste-1994,Qi-2011}, i.e., a smooth mapping $r_x:T_xB_e\to B_e$ with the following properties: 
\begin{itemize}
\item[a)]$r_x(0)=x$
\item[b)]$Dr_x(0)=id_{T_xB_e}$ (local rigidity condition \cite{Absil-book-2008})
\end{itemize}
with the canonical identification $T_0T_xB_e\cong T_xB_e$. Properties a) and b) are also satisfied by the exponential mapping. An example of this kind of mapping is the following mapping that we will use in our implementation of Newton optimization methods on $B_e$:
\begin{align*}
r_x:\ &T_xB_e\to B_e\\
    &\eta\mapsto \left(\begin{array}{rl}
                   x+\eta:& S^1\to\R^2\\
                          &  \theta\mapsto x(\theta)+\eta(\theta)
                   \end{array}\right) 
\end{align*}
The implementation of the retraction $r_x$ defined above is much simpler than the implementation of the exponential map. However, one should keep in mind that \begin{rrev}this retraction may leave $B_e$ if $\eta$ is not small enough, as intersections and kinks may appear in the shape. \end{rrev}

Now, the Newton method for optimization is generalized to
\begin{equation}\label{gen-Newton}
x^{k+1}=r_{x^k}(-\begin{rrev}M_k\end{rrev}F(x^k))
\end{equation}
where the special case $r_x=\exp_x$ considered above falls also into this class of iterations. Since any retraction is a smooth mapping and because of the local rigidity, we observe that
\[
d_G(x,r_x(\eta))=d_G(x,\exp_x(\eta))+cd_G(x,\exp_x(\eta))^2
\]
for some constant $c$ and for small enough tangential vectors $\eta$. Plugging this into the estimations  (\ref{initial.1}, \ref{initial.2}) and (\ref{Cauchy.1}, \ref{Cauchy.2}), we conclude that locally linear, superlinear and quadratic convergence properties are not changed by using more general retractions rather than the special case of the exponential map, \begin{rrev}as has been similarly observed in \cite{Absil-book-2008}. \end{rrev}

\section{Numerical Experiments}\label{numex}
Here, we study the linear and quadratic convergence properties of standard optimization algorithms applied to an example for shape optimization in $\R^2$ which is as simple as possible, but nevertheless reveals structures reflecting the discussion of this paper. We consider the following problem:
\begin{align}\label{MSO}
\min\limits_{\Omega}f(\Omega)=\int_{\Omega}x_1^2+\mu\begin{revision}^2\end{revision} x_2^2-1dx\, , \qquad \mu\ge 1
\end{align}
\begin{revision}Thus, we carry on example 1 above with specifically $\psi(x)=x_1^2+\mu^2 x_2^2-1$. 
\begin{rrev}At first glance, the reader might be misled by the apparent familiarity\end{rrev} of the problem structure. It \begin{rrev}resembles\end{rrev} a quadratic optimization problem, whose solution is expected at the point $(0,0)$ or maybe the empty set. The objective value of both (zero and the empty set) is zero and, as we see below, the optimal objective value is strictly less than zero. The rationale for finding the optimal solution is to find a shape, where the shape derivative is zero, i.e., where the integrand above is zero. This is \end{revision} the shape $\hat{\Omega}$ with boundary
\[
\partial\hat{\Omega}=\{x\in\R^2\ |\ x_1^2+\mu^2 x_2^2=1\}
\]
\begin{revision}where $\psi(x)=0$ in example 1 above and therefore, all terms  related to $\psi$ in (\ref{R-Hess}) vanish\end{revision}. We observe that the Riemannian shape Hessian at this solution is a multiplication operator
\begin{revision}
\begin{align}\label{numhess}
\hess{}f(\hat{\Omega})V&=\nu\cdot V\\
\nu(s)&=2(s_1n_1(s)+\mu^2s_2n_2(s))=2\sqrt{s_1^2+\mu^4s_2^2}\in[2,2{\mu}], s\in\partial\hat{\Omega}
\end{align}
where $(n_1(s),n_2(s))$ denotes the unit normal at $s\in\partial\hat{\Omega}$.
\end{revision}
Therefore, the Hessian is coercive and thus the solution is locally unique. 
\begin{revision} Furthermore, we observe for an exact Newton method $\kappa=0$ in theorem \ref{contraction} and also $\omega=\max\{2/\nu(s)\, |\, s\in\partial\hat{\Omega}\}$ at the solution. Together with obvious smoothness properties, we can apply theorem \ref{contraction} and expect quadratic convergence. \begin{rrev}By exact Newton method, we mean here the iteration (\ref{gen-Newton}) with $M_k$ replaced by the exact Hessian (\ref{numhess}). This is the method that we use in the comparisons below.\end{rrev}

\end{revision}

The distance of a shape iterate $\Omega^k$ to the optimal shape $\hat{\Omega}$ should be measured by the length of a connecting geodesic. However, in order to reduce the numerical effort, we exploit the fact that a connecting geodesic is also the result of an exponential map, which on the other hand is an isometry. Thus, we use for shapes close enough to the optimal solution such that
$\Omega^k=\hat{\Omega}+h$ with $h=\alpha\vec{n}|_{\partial\hat{\Omega}}$ as second order accurate substitute for the geodesic distance the measure
\begin{equation}\label{distapprox}
\tilde{d}(\Omega^k,\hat{\Omega})=\int\limits_{\partial\hat{\Omega}}|\alpha|ds
\end{equation}
i.e., just the absolute value of the area between the iterates and the optimal shape.

\def\atan{\mathrm{atan2}}
\def\mumat{\left[\begin{array}{cc}1&0\\0&\mu\end{array}\right]}
Because of the simplicity of the objective (\ref{MSO}), the integrations for the evaluations of the objective can be performed in the form of integration along the shape boundary in polar coordinates \begin{revision}  which can be evaluated by usage of the trapezoidal rule for a piecewise linear boundary discretization in the following way.
We define the transformed domain $\Omega^{\mu}:=\mumat\Omega$ with \begin{rrev}boundary\end{rrev} $c^\mu:=\partial\mumat\Omega$ and resulting change of variables $y:=\mumat^{-1}x$. Then, we compute
\begin{align*}
\int_{\Omega}x_1^2+\mu^2x_2^2 -1 dx &=\frac1\mu \int_{\Omega^\mu}y_1^2+y_2^2 -1 dx =
\frac1\mu \int_{S^1}\int_0^{\norm{c^\mu(\theta)}}(r^2-1)rdr d\theta\\
&=\frac1\mu\int_0^{2\pi}\int_0^{\norm{c^\mu(\theta)}}r^3-r dr d\theta =
 \frac1\mu\int_0^{2\pi}\frac{\norm{c^\mu(\theta)}^4}4 -\frac{\norm{c^\mu(\theta)}^2}2 {d\theta}\\
\approx\frac1{2\mu}\sum_{i=1}^N(\atan(c^\mu_{i+1})&-\atan(c^\mu_{i}))
\left(\frac{\norm{c^\mu_{i+1}(\theta)}^4+\norm{c^\mu_i(\theta)}^4}4 -\frac{\norm{c^\mu_{i+1}(\theta)}^2+\norm{c^\mu_i(\theta)}^2}2\right)
\end{align*}
In the last line, the curve $c^\mu$ is approximated by a periodic polygon with nodes $c^\mu_i\in\R^2$, $i=1,\ldots N+1$, and $c^\mu_{N+1}=c^\mu_{1}$. Furthermore, the function $\atan:\R^2\to (-\pi,\pi]$ means the standard inverse tangent function with two arguments as provided by many programming languages (C, C++, FORTRAN, Python,..).
\end{revision}

Similarly, in the line of (\ref{distapprox}), first order approximations of the distance of curves to the optimal solution, which differ only by a small amount are computed by
\begin{revision}
\begin{align*}
\bar{d}(\Omega^k,\hat{\Omega})&=\int_{\partial\hat{\Omega}}\left|\;\norm{c(s)}-\norm{\hat{c}(s)}\;\right|ds =
\frac1\mu\int_{S^1}\left|\; \norm{c^\mu(s)}-1\right|{d\theta}\\
&\approx\frac1{2\mu}\sum_{i=1}^N(\atan(c^\mu_{i+1})-\atan(c^\mu_{i}))
\left(\left|\; \norm{c^\mu_{i+1}}-1\right|+\left|\; \norm{c^\mu_i}-1\right|
\right)
\end{align*}
where we exploit, that the boundary curve of the optimal solution is $\partial\hat{\Omega}=\mumat S^1$. \end{revision}
Furthermore, the tangential (and thus the unit normal) vector field for the iterates is computed by central differences.

The first idea for setting $\mu$ in the numerical experiments is $\mu=1$. However, then
$\hess{}f(\hat{\Omega})=2 \cdot id_{T_{\partial\hat{\Omega}}B_e}$ which means that the method of steepest descent with exact line search can exactly mimic the behaviour of the Newton method---including quadratic convergence. However, we want to experience the difference between steepest descent methods and Newton's method in this example. Therefore, we choose $\mu =2$ for performing the computations.
The initial shape is defined by its boundary $\partial\Omega^0:=c^0$ as
\[
c^0(s)=\frac12\left(
\begin{array}{l}
\cos(s)-0.15|1-\sin(2s)|\cos(s)\\
\sin(s)-0.15|1-\cos(2s)|\cos(s)
\end{array}
\right)
\, , \qquad s\in [0,2\pi]
\]
just to be somewhat more interesting than a simple circle. We use a piecewise linear approximation with 100 equidistant pieces in $[0,2\pi]$.
\begin{table}[h]
\caption{Performance of shape algorithms: steepest descent (indices SD) versus {\color{black}exact} Newton (indices NM), $\alpha$ denotes the line search parameter chosen by an exact line search applied to the objective.}
\begin{revision}
\begin{center}
\begin{tabular}{r|r|l|r||r|l|r}
It.-No.&$f(\Omega^k_{SD})$&$\bar{d}(\Omega^k_{SD},\hat{\Omega})$&$\alpha_{SD}$
&$f(\Omega^k_{NM})$&$\bar{d}(\Omega^k_{NM},\hat{\Omega})$&$\alpha_{NM}$\\\hline\hline
0&-0.5571&0.9222E+00& 0.50&-0.5571&0.9222E+00& 0.63\\
 1&-0.7630&0.2137E+00& 0.32&-0.7775&0.1382E+00& 0.98\\
 2&-0.7830&0.6174E-01& 0.36&-0.7854&0.3571E-02& 1.00\\
 3&-0.7852&0.1730E-01& 0.33&-0.7854&0.8187E-05& 1.00\\
 4&-0.7854&0.4888E-02& 0.34&-0.7854&0.1736E-09&\\
 5&-0.7854&0.1448E-02& 0.34\\
 6&-0.7854&0.4404E-03& 0.33\\
 7&-0.7854&0.1300E-03& 0.34\\
 8&-0.7854&0.4065E-04& 0.33\\
 9&-0.7854&0.1228E-04& 0.34\\
10&-0.7854&0.3909E-05& 0.33\\
11&-0.7854&0.1198E-05& 0.35\\
12&-0.7854&0.4026E-06& 0.32\\
13&-0.7854&0.1171E-06& 0.33\\
14&-0.7854&0.3650E-07& 0.25\\
15&-0.7854&0.8370E-08& 0.31\\
16&-0.7854&0.3041E-08&\\
\end{tabular}
\end{center}
\end{revision}
\label{tab-performance}
\end{table}
Table \ref{tab-performance} shows the performance of the shape steepest descent method versus the shape Newton method with exact line search, where we use $A=0$ in the definition of $G^A$ similarly as in \cite{Joshi07a}. 
\begin{revision}As in \cite{Joshi07a}, the potential pathologies of the case $A=0$ are numerically not felt, because the unavoidably finite discretization acts in a regularizing manner \end{revision}.The iterations are stopped at a distance of less than $10^{-7}$ to the solution. For the steepest descent method, we observe linear convergence with a factor of approximately $0.3$, while for the Newton method we see clear quadratic convergence. Figure \ref{fig-iterations} shows the various shape boundaries during the respective iterations.

\begin{figure}[h]
\label{fig-iterations}
\unitlength1cm
\begin{picture}(12,3)
\put(0,-4.8){\includegraphics[scale=0.30]{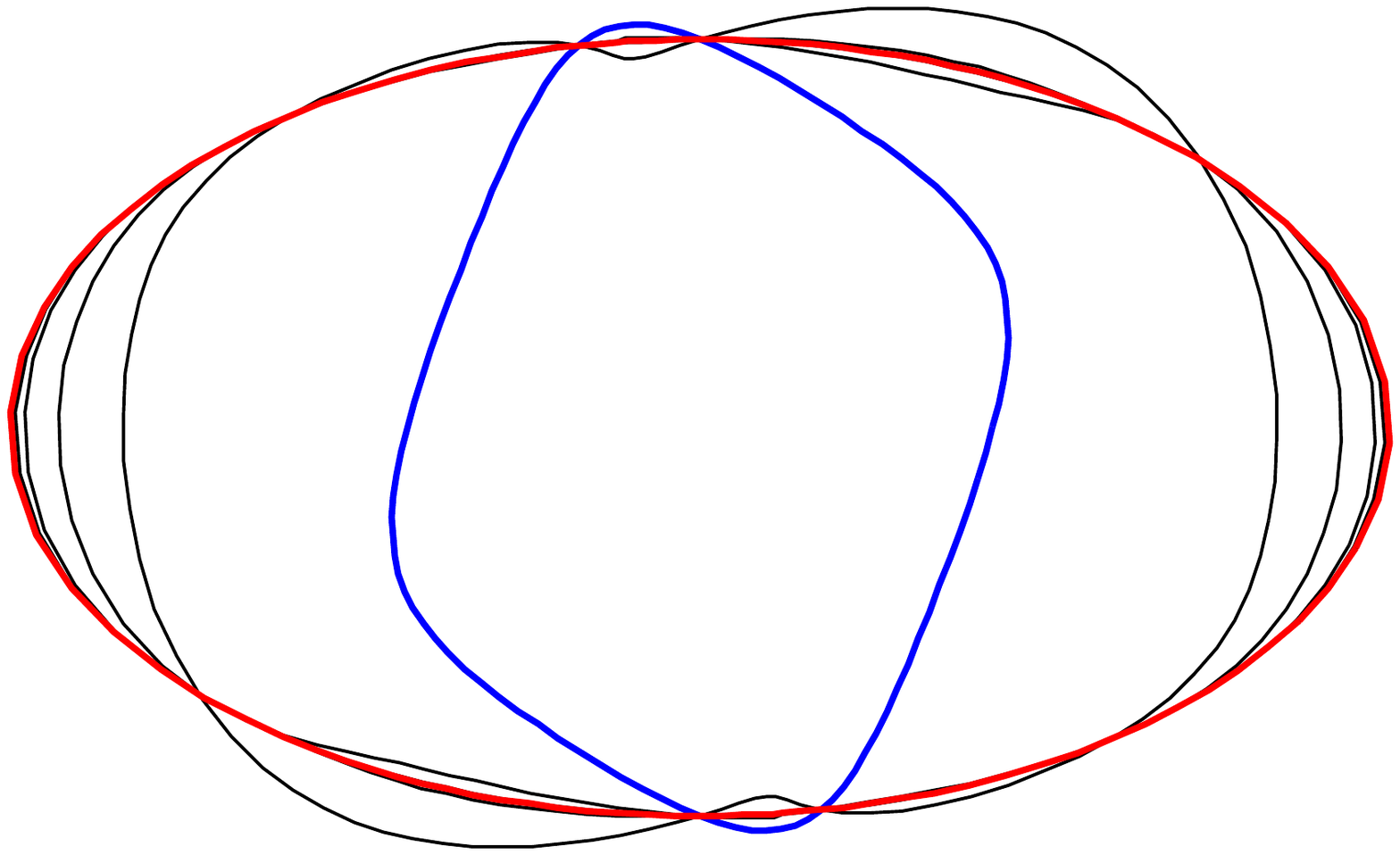}}
\put(6,-4.8){\includegraphics[scale=0.30]{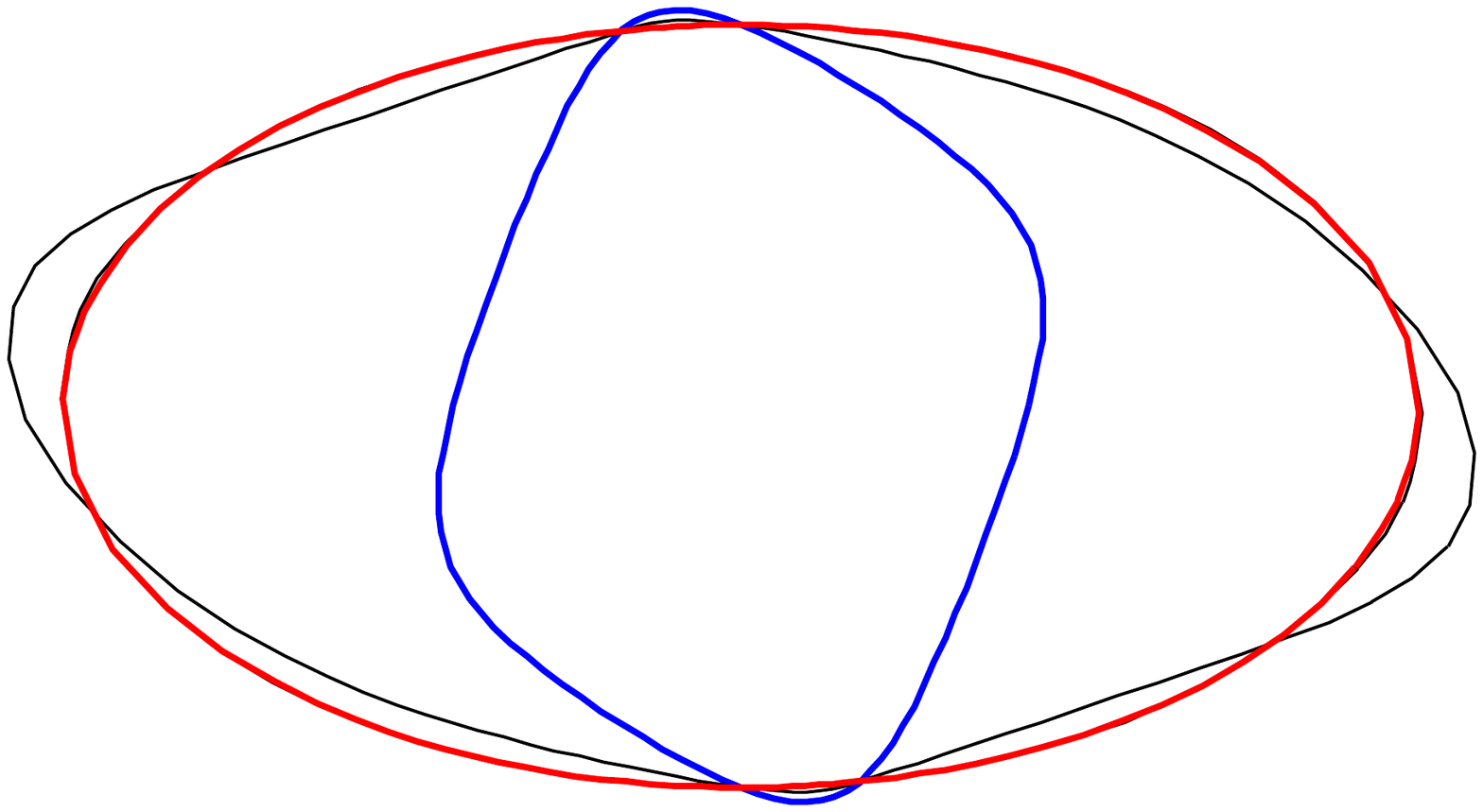}}
\end{picture}
\caption{Visualization of the shape iterates for the steepest descent (left) and for Newton's method (right) from initial (blue) to solution (red) shape.}
\end{figure}

\section{Conclusions}
In this paper, a novel point of view on shape optimization is presented---the Riemannian point of view. The Riemannian shape Hessian is introduced which can serve as a much more useful notion of second shape derivative than the classical so-called shape Hessian. With this term, classical optimization results, known in linear spaces, can be formulated and proved in the area of shape optimization. Newton optimization methods for shape optimization are analyzed. A new model problem for shape optimization is introduced which mimics the properties of a standard $L^2$-quadratic model problem in linear spaces---in so far as the Riemannian Hessian is just a multiplicative operator.  

Several aspects are now open for further research: This paper only deals with $C^{\infty}$-boundaries of 2D shapes. Of course, for practical purposes, this is not enough. The regularity has to be reduced, one has to go from dimension 2 to 3 and one has to treat shapes which are just part of submanifolds of $\R^3$, rather than  whole embeddings of the unit sphere. Furthermore, the implications for practical shape optimization have to be analyzed. This will all be covered in subsequent papers.

\section*{Acknowledgment} The author is very grateful for the hint of Oliver Roth (University of W\"urzburg, Germany) to the publication \cite{ShaMom2004}, where the whole endeavor of this paper began. \begin{revision}Furthermore, I would like to thank three anonymous referees, who have helped to polish the paper significantly.\end{revision}


\end{document}